\def\rife#1{(\ref{#1})}
\newcommand{\disp}{\displaystyle}
\newtheorem{theo}{\sc Theorem}
\newtheorem{lemma}[theo]{\sc Lemma}
\newtheorem{ohss}[theo]{\sc Remark}
\def\R{\mathbb{R}}
\def\N{\mathbb{N}}
\def\be{\begin{equation}}
\def\ee{\end{equation}}
\newcommand{\arrstre}{\renewcommand{\arraystretch}{2}}
\def\un{u_{n}}
\def\wm{w_{M}}
\def\fn{f_{n}}
\def\elle#1{L^{#1}(\Omega)}
\def\wq{W_{0}^{1,q}(\Omega)}
\def\huz{H_{0}^{1}(\Omega)}
\def\ww{W_{0}^{1,1}(\Omega)}
\def\io{\int_{\Omega}}
\def\norma#1#2{\|#1\|_{\lower 4pt \hbox{$\scriptstyle #2$}}}
\begin{document}

\title{$W^{1,1}_0$ minima of non coercive functionals}

\author{Lucio Boccardo, Gisella Croce, Luigi Orsina}
\address{L.B. -- Dipartimento di Matematica, ``Sapienza'' Universit\`{a} di Roma,
P.le A. Moro 2, 00185 Roma (ITALY)}
\email{boccardo@mat.uniroma1.it}
\address{G.C. -- Laboratoire de Math\'ematiques Appliqu\'ees du Havre, Universit\'e du Havre,
25, rue Philippe Lebon, 76063 Le Havre (FRANCE)}
\email{gisella.croce@univ-lehavre.fr}
\address{L.O. -- Dipartimento di Matematica, ``Sapienza'' Universit\`{a} di Roma,
P.le A. Moro 2, 00185 Roma (ITALY)}
\email{orsina@mat.uniroma1.it}

\begin{abstract}
We study an integral non coercive functional defined on $\huz$, proving the existence of a minimum in $\ww$.
\end{abstract}

\maketitle


In this paper we study a class of integral functionals defined on $\huz$, but non coercive on the same space, so that the standard approach of the Calculus of Variations does not work. However, the functionals are coercive on
$\ww$ and we will prove the existence of minima, despite the non reflexivity of $\ww$, which implies that, in general, the Direct Methods fail due to lack of compactness.

\smallskip

Let $J$ be the functional defined as
$$
J(v) = \io\frac{j(x,\nabla v)}{[1+b(x)|v|]^{2}} + \frac12\io|v|^2 - \io f\,v\,,
\quad
v \in \huz\,.
$$
We assume that $\Omega$ is a bounded open set of $\R^{N}$, $N > 2$, that
$j : \Omega \times \R^{N} \to \R$ is such that $j(\cdot,\xi)$ is measurable on $\Omega$ for every $\xi$ in $\R^{N}$, $j(x,\cdot)$ is convex and belongs to $C^{1}(\R^{N})$ for almost every $x$ in $\Omega$, and
\be\label{hypj}
\alpha |\xi|^{2} \leq j(x,\xi) \leq \beta |\xi|^{2}\,,
\ee
\be\label{hypjp}
|j_{\xi}(x,\xi)| \leq \gamma |\xi|\,,
\ee
for some positive $\alpha$, $\beta$ and $\gamma$, for almost every $x$ in $\Omega$, and for every $\xi$ in $\R^{N}$. We assume that $b$ is a measurable function on $\Omega$ such that
\be\label{hypb}
0 \leq b(x) \leq B\,,
\quad
\mbox{for almost every $x$ in $\Omega$,}
\ee
where $B > 0$, while $f$ belongs to some Lebesgue space. For $k > 0$ and $s \in \R$, we define the truncature function as $T_{k}(s) = \max(-k,\min(s,k))$.

In \cite{BO-Pisa} the minimization in $\huz$ of the functional
$$
I(v) = \io\frac{j(x,\nabla v)}{[1+|v|]^{\theta}} - \io f\,v\,,
\quad 0<\theta<1,
\;
f\in\elle m\,,
$$
was studied.
It was proved that $I(v)$ is coercive on the Sobolev space $\wq$, for some $q = q(\theta,m)$ in $(1,2)$, and that $I(v)$ achieves its minimum on $\wq$. This approach does not work for $\theta > 1$ (see Remark \ref{coerc} below). Here we will able to overcome this difficulty thanks to the presence of the lower order term $\io|v|^2$, which will yield the coercivity of $J$ on $\ww$; then we will prove the existence of minima in $\ww$, even if it is a non reflexive space.

Integral functionals like $J$ or $I$ are studied in \cite{bbl}, in the context of the Thomas-Fermi-von Weizs\"acker theory.

We are going to prove the following result.
\begin{theo}\label{main_theorem}\sl
Let $f \in L^2(\Omega)$. Then there exists $u$ in $\ww\cap\elle2$
minimum of $J$, that is,
\be\label{enunciato}
\io \!\frac{j(x,\nabla u)} {[1+b(x)|u|]^{2}}+\frac12\io\!|u|^2
-\io f u
 \leq
\io \!\frac{j(x,\nabla v)} {[1+b(x)|v|]^{2}}+\frac12\io\!|v|^2
-\io f v,
\ee
for every $v$ in $\huz$. Moreover
 $T_k(u)$ belongs to $\huz$ for every $k>0$.
\end{theo}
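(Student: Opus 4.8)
The plan is to approximate $J$ by functionals coercive on $\huz$, to minimize each of them by the Direct Method, and then to pass to the limit, using the truncations $T_{k}$ to bypass the non reflexivity of $\ww$. For $n\in\N$ I set
$$
J_{n}(v)=\io\frac{j(x,\nabla v)}{[1+b(x)|T_{n}(v)|]^{2}}+\frac12\io|v|^{2}-\io f\,v\,,\qquad v\in\huz\,.
$$
Since $1\le[1+b(x)|T_{n}(v)|]^{2}\le(1+Bn)^{2}$, hypothesis \rife{hypj} yields $J_{n}(v)\ge\frac{\alpha}{(1+Bn)^{2}}\io|\nabla v|^{2}+\frac12\io|v|^{2}-\norma{f}{\elle2}\,\norma{v}{\elle2}$, so $J_{n}$ is coercive on $\huz$. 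It is also sequentially weakly lower semicontinuous: if $v_{m}\rightharpoonup v$ in $\huz$, then, along a subsequence, $v_{m}\to v$ in $\elle2$ and a.e., hence $T_{n}(v_{m})\to T_{n}(v)$ a.e.\ and the weights $g_{m}:=[1+b(x)|T_{n}(v_{m})|]^{-2}$ satisfy $0\le g_{m}\le1$ and $g_{m}\to g:=[1+b(x)|T_{n}(v)|]^{-2}$ a.e.; multiplying the convexity inequality $j(x,\nabla v_{m})\ge j(x,\nabla v)+j_{\xi}(x,\nabla v)\cdot\nabla(v_{m}-v)$ by $g_{m}$ and integrating, one passes to the limit by dominated convergence in the zero--order term (bounded by $\beta|\nabla v|^{2}\in\elle1$ by \rife{hypj}) and by strong $\elle2$ convergence of $g_{m}\,j_{\xi}(x,\nabla v)$ (legitimate since $|j_{\xi}(x,\nabla v)|\le\gamma|\nabla v|$ by \rife{hypjp}) tested against $\nabla(v_{m}-v)\rightharpoonup0$, obtaining
$$
\liminf_{m}\io g_{m}\,j(x,\nabla v_{m})\ge\io g\,j(x,\nabla v)\,;
$$
together with weak $\elle2$ lower semicontinuity of $\frac12\io|v|^{2}$ and continuity of $v\mapsto\io f\,v$, this gives $\liminf_{m}J_{n}(v_{m})\ge J_{n}(v)$. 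Hence $J_{n}$ attains its minimum at some $\un\in\huz$.

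Next I would establish a priori bounds independent of $n$. Since $j(x,0)=0$ by \rife{hypj}, $J_{n}(\un)\le J_{n}(0)=0$, that is
$$
\io\frac{j(x,\nabla\un)}{[1+b(x)|T_{n}(\un)|]^{2}}+\frac12\io|\un|^{2}\le\io f\,\un\le\norma{f}{\elle2}\,\norma{\un}{\elle2}\,,
$$
which forces $\norma{\un}{\elle2}\le2\,\norma{f}{\elle2}$; reinserting this and using \rife{hypj}, \rife{hypb} and $|T_{n}(\un)|\le|\un|$ gives $\io|\nabla\un|^{2}[1+B|\un|]^{-2}\le C$, with $C$ independent of $n$. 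By Cauchy--Schwarz,
$$
\io|\nabla\un|\le\left(\io\frac{|\nabla\un|^{2}}{[1+B|\un|]^{2}}\right)^{1/2}\left(\io(1+B|\un|)^{2}\right)^{1/2}\le C\,,
$$
so $\un$ is bounded in $\ww$, and $\io|\nabla T_{k}(\un)|^{2}\le(1+Bk)^{2}\io|\nabla\un|^{2}[1+B|\un|]^{-2}\le(1+Bk)^{2}C$ shows $T_{k}(\un)$ bounded in $\huz$ for every $k>0$. Hence, up to a subsequence, $\un\to u$ in $\elle1$ and a.e.\ (compact embedding $\ww\hookrightarrow\elle1$), $\un\rightharpoonup u$ in $\elle2$, and $T_{k}(\un)\rightharpoonup T_{k}(u)$ in $\huz$ for every $k$; in particular $T_{k}(u)\in\huz$ for every $k>0$. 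Denoting by $\nabla u$ the a.e.\ limit of $\nabla T_{k}(u)$ as $k\to\infty$, weak lower semicontinuity of the $\elle1$ norm gives $\io|\nabla T_{k}(u)|\le\liminf_{n}\io|\nabla T_{k}(\un)|\le\liminf_{n}\io|\nabla\un|\le C$, and letting $k\to\infty$ we obtain $u\in\ww\cap\elle2$.

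Finally I would pass to the limit in the minimality of $\un$. For fixed $v\in\huz$, $J_{n}(v)\to J(v)$ by dominated convergence (integrand bounded by $\beta|\nabla v|^{2}$), so $\limsup_{n}J_{n}(\un)\le\limsup_{n}J_{n}(v)=J(v)$. For the reverse inequality, $-\io f\,\un\to-\io f\,u$, $\liminf_{n}\frac12\io|\un|^{2}\ge\frac12\io|u|^{2}$, and, since $j(x,0)=0$, for every $k>0$ and $n\ge k$,
$$
\io\frac{j(x,\nabla\un)}{[1+b(x)|T_{n}(\un)|]^{2}}\ge\io\frac{j(x,\nabla T_{k}(\un))}{[1+b(x)|T_{k}(\un)|]^{2}}\,,
$$
because on $\{|\un|<k\}$ the two integrands agree, while on $\{|\un|\ge k\}$ the right-hand one vanishes and the left-hand one is nonnegative. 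Applying to $T_{k}(\un)\rightharpoonup T_{k}(u)$ the same convexity/strong--weak argument as above,
$$
\liminf_{n}\io\frac{j(x,\nabla\un)}{[1+b(x)|T_{n}(\un)|]^{2}}\ge\io\frac{j(x,\nabla T_{k}(u))}{[1+b(x)|T_{k}(u)|]^{2}}=\int_{\{|u|<k\}}\frac{j(x,\nabla u)}{[1+b(x)|u|]^{2}}\,,
$$
and letting $k\to\infty$ (monotone convergence) gives $\liminf_{n}J_{n}(\un)\ge J(u)$. Combining, $J(u)\le J(v)$ for every $v\in\huz$, which is \rife{enunciato}. The main obstacle is precisely this passage to the limit: $\{\un\}$ is not compact in $\huz$ and $\ww$ is not reflexive, so the non coercive integral in $J_{n}(\un)$ cannot be treated directly; the device is to localise on the level sets $\{|\un|<k\}$, where $\nabla\un=\nabla T_{k}(\un)$ is bounded in $\elle2$, to apply lower semicontinuity there, and only then to let $k\to\infty$, relying throughout on the strong $\elle1$ convergence from $\ww\hookrightarrow\elle1$ and never on weak $\ww$ compactness.
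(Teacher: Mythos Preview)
Your proof is correct and reaches the conclusion, but the route differs from the paper's in several respects. The paper uses a two-layer approximation: first (Lemma~\ref{esistelinfty}) it truncates the weight, as you do, but only in order to show that for \emph{bounded} data $g\in\elle\infty$ a minimizer exists in $\huz\cap\elle\infty$; then it sets $f_n=T_n(f)$ and lets $\un$ minimize the \emph{untruncated} functional $v\mapsto\io j(x,\nabla v)[1+b(x)|v|]^{-2}+\frac12\io|v|^2-\io f_n v$. You instead keep $f$ and truncate only the weight, collapsing the two stages into one coercive problem on $\huz$. For the passage to the limit in the gradient term, the paper proves strong $\elle2$ convergence of $\un$ (via equiintegrability of $\un^{2}$, using \rife{gk}) and, crucially, the weak $(\elle2)^N$ convergence \rife{weakconv} of $\nabla\un/(1+b|\un|)$, singled out in the remark after Lemma~\ref{lemma_convergenze}; it then applies convexity centered at $\nabla T_k(u)$ and uses $j_\xi(x,\nabla T_k(u))\cdot\nabla(u-T_k(u))=0$. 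Your device is more elementary: you first drop from $\io\frac{j(x,\nabla\un)}{[1+b|T_n(\un)|]^{2}}$ to $\io\frac{j(x,\nabla T_k(\un))}{[1+b|T_k(\un)|]^{2}}$ by nonnegativity of $j$, and then invoke only the weak $\huz$ convergence of $T_k(\un)$, never needing \rife{weakconv} or strong $\elle2$ compactness. This buys you a shorter, self-contained argument; the paper's route, on the other hand, produces finer structural information (equiintegrability of $\nabla\un$, strong $\elle2$ convergence, \rife{weakconv}) that is of independent interest.
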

In \cite{BCO}
we studied the following elliptic boundary problem:
\be\left\{
\arrstre
\begin{array}{cl}
\disp
-{\rm div}\bigg(\frac{a(x)\,\nabla u}{(1+b(x)|u|)^{2}}\bigg) + u = f & \mbox{in $\Omega $,}\\
\hfill u = 0 \hfill & \mbox{on $\partial\Omega $,}
\end{array}
\right.
\label{lineare}
\ee
under the same assumptions on $\Omega$, $b$ and $f$,
with
$0 <\alpha\leq a(x) \leq\beta$.
It is easy to see that the Euler equation of $J$, with $j(x,\xi)=\frac12 a(x)|\xi|^2$, is not
equation (\ref{lineare}). Therefore Theorem \ref{main_theorem} cannot be deduced from \cite{BCO}.
Nevertheless some technical steps of the two papers (for example, the a priori estimates) are similar.

\smallskip

We will prove Theorem \ref{main_theorem} by approximation. Therefore,
we begin with the case of bounded data.

\begin{lemma}\label{esistelinfty}\sl
If $g$ belongs to $\elle\infty$, then there exists a minimum $w$ belonging to $\huz \cap \elle\infty$ of the functional
$$
v\in \huz \mapsto \io\frac{j(x,\nabla v)}{[1+b(x)|v|]^{2}} + \frac12\io|v|^2 - \io g\,v\,.
$$
\end{lemma}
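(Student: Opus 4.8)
The plan is to apply the Direct Method of the Calculus of Variations in $\huz$, which works here because the datum $g$ is bounded. First I would observe that the functional, call it $J_g$, is well defined and finite on all of $\huz$: by the upper bound in \rife{hypj} the first integrand is bounded above by $\beta|\nabla v|^2$, and the remaining two terms are controlled by $\|v\|^2_{L^2(\Omega)}$ and $\|g\|_{L^2(\Omega)}\|v\|_{L^2(\Omega)}$. For coercivity on $\huz$, I would use the lower bound in \rife{hypj} together with \rife{hypb}: on the set $\{|v|\le R\}$ the denominator $[1+b(x)|v|]^2$ is bounded above by $(1+BR)^2$, so the first term dominates $\frac{\alpha}{(1+BR)^2}\io_{\{|v|\le R\}}|\nabla v|^2$; on $\{|v|>R\}$ one instead uses the quadratic lower order term $\frac12\io|v|^2$, which is unbounded there. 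Combining these with Young's inequality on $\io g\,v$ gives $J_g(v)\to+\infty$ as $\|v\|_{\huz}\to+\infty$. Hence any minimizing sequence $\{v_n\}$ is bounded in $\huz$, and up to a subsequence $v_n\rightharpoonup w$ weakly in $\huz$, strongly in $L^2(\Omega)$, and a.e. in $\Omega$.

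The core of the argument is lower semicontinuity of $J_g$ along this sequence. The linear term $-\io g\,v_n$ converges since $v_n\to w$ strongly in $L^2(\Omega)$ and $g\in L^2(\Omega)$, and $\frac12\io|v_n|^2\to\frac12\io|w|^2$ for the same reason; in fact both are continuous. The delicate term is $\io \frac{j(x,\nabla v_n)}{[1+b(x)|v_n|]^2}$. I would handle it by a standard convexity-plus-a.e.-convergence argument: since $j(x,\cdot)$ is convex and $C^1$, for a.e. $x$
\[
j(x,\nabla v_n) \ge j(x,\nabla w) + j_\xi(x,\nabla w)\cdot(\nabla v_n-\nabla w).
\]
Dividing by $[1+b(x)|v_n|]^2$ and using that $\frac{1}{[1+b(x)|v_n|]^2}\to\frac{1}{[1+b(x)|w|]^2}$ a.e.\ (by the a.e.\ convergence of $v_n$) together with the uniform bound $0\le \frac{1}{[1+b(x)|v_n|]^2}\le 1$, one passes to the limit: the term $\frac{j(x,\nabla w)}{[1+b(x)|v_n|]^2}$ tends to $\frac{j(x,\nabla w)}{[1+b(x)|w|]^2}$ in $L^1(\Omega)$ by dominated convergence (dominated by $\beta|\nabla w|^2$), while the term $\frac{j_\xi(x,\nabla w)}{[1+b(x)|v_n|]^2}\cdot(\nabla v_n-\nabla w)$ tends to $0$ because $\frac{j_\xi(x,\nabla w)}{[1+b(x)|v_n|]^2}\to\frac{j_\xi(x,\nabla w)}{[1+b(x)|w|]^2}$ strongly in $L^2(\Omega;\R^N)$ (again dominated convergence, using \rife{hypjp} so that $|j_\xi(x,\nabla w)|\le\gamma|\nabla w|\in L^2(\Omega)$) while $\nabla v_n-\nabla w\rightharpoonup 0$ weakly in $L^2(\Omega;\R^N)$. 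This yields
\[
\liminf_n \io \frac{j(x,\nabla v_n)}{[1+b(x)|v_n|]^2}\ \ge\ \io \frac{j(x,\nabla w)}{[1+b(x)|w|]^2},
\]
hence $J_g(w)\le\liminf_n J_g(v_n)=\inf_{\huz}J_g$, so $w$ is a minimum.

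Finally I would show $w\in L^\infty(\Omega)$. The natural tool is to test the minimality of $w$ against truncations: comparing $J_g(w)$ with $J_g(T_k(w))$ (which is an admissible competitor in $\huz\cap L^\infty(\Omega)$), and using that the denominator only helps on the truncated part while the term $\frac12\io|v|^2$ and the lower bound $j(x,\xi)\ge\alpha|\xi|^2$ are monotone under truncation, one derives a Stampacchia-type inequality for the level sets $\{|w|>k\}$; since $g\in L^\infty(\Omega)$, the classical Stampacchia lemma then gives an a priori $L^\infty$ bound on $w$. I expect this last step — making the comparison with truncations rigorous, since the quotient structure $j(x,\nabla w)/[1+b(x)|w|]^2$ does not split additively under $w\mapsto T_k(w)$ — to be the main technical obstacle; the boundedness of $g$ is what makes it work, and it is precisely the point where the proof for general $f\in L^2(\Omega)$ must instead go through the approximation announced before the lemma.
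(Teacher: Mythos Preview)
Your lower semicontinuity argument is exactly the one the paper uses, and your plan for the $L^\infty$ bound (compare $J_g(w)$ with $J_g(T_k(w))$) is in the right spirit. The genuine gap is the coercivity step: the functional is \emph{not} coercive on $\huz$, and the paper says so explicitly at the start of its proof. Your argument only shows that if $J_g(v_n)$ is bounded then $\io|v_n|^2$ is bounded and, for every fixed $R$, $\int_{\{|v_n|\le R\}}|\nabla v_n|^2$ is bounded. That does not control $\int_{\{|v_n|>R\}}|\nabla v_n|^2$: a sequence can keep $\io|v_n|^2$ bounded while concentrating arbitrarily large gradient on the small set $\{|v_n|>R\}$, where the denominator $[1+b(x)|v_n|]^2$ kills the first term. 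Concretely, for $N\ge 4$ take $v_n$ equal to $n$ on a ball of radius $r_n\sim n^{-2/N}$ and affine down to $0$ on the surrounding annulus of comparable width; then $\io|v_n|^2$ stays bounded, $\io\frac{|\nabla v_n|^2}{(1+|v_n|)^2}\sim n^{(4-N)/N}$ stays bounded, but $\io|\nabla v_n|^2\sim n^{4/N}\to\infty$. So a minimizing sequence need not be bounded in $\huz$, and the Direct Method in $\huz$ does not go through.

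The paper's remedy is to reverse the order of your last two steps. It first approximates by the truncated functionals
\[
J_M(v)=\io\frac{j(x,\nabla v)}{[1+b(x)|T_M(v)|]^{2}}+\frac12\io|v|^2-\io g\,v,
\]
which \emph{are} coercive on $\huz$ because the denominator is now bounded by $(1+BM)^2$; each $J_M$ therefore has a minimizer $w_M\in\huz$. Then, comparing $J_M(w_M)$ with $J_M(T_A(w_M))$ for $A=\|g\|_{\elle\infty}<M$, one gets the uniform bound $\|w_M\|_{\elle\infty}\le\|g\|_{\elle\infty}$ directly (no Stampacchia iteration is needed: the comparison collapses to $\int_{\{|w_M|>A\}}(|w_M|-A)^2\le 0$). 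With the $L^\infty$ bound in hand, $T_M(w_M)=w_M$, the denominator is uniformly bounded, and the $\huz$ estimate follows immediately from $J_M(w_M)\le J_M(0)$. One then passes to the limit $M\to\infty$ using precisely your convexity/dominated-convergence argument for lower semicontinuity. In short: the $L^\infty$ bound has to come \emph{before} the $\huz$ bound, and the approximation $J_M$ is what allows you to run the truncation comparison on an honest $\huz$ minimizer.
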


\begin{proof} Since the functional is not coercive on $\huz$, we cannot directly apply the standard techniques of the Calculus of Variations. Therefore, we begin by approximating it. Let $M > 0$, and let $J_{M}$ be the functional defined as
$$
J_{M}(v) = \io\frac{j(x,\nabla v)}{[1+b(x)|T_{M}(v)|]^{2}} + \frac12\io|v|^2 - \io g\,v\,,
\quad
v \in \huz\,.
$$
Since $J_{M}$ is both weakly lower semicontinuous (due to the convexity of $j$ and to De Giorgi's theorem, see \cite{DG}) and coercive on $\huz$, for every $M > 0$ there exists a minimum $\wm$ of $J_{M}$ on $\huz$. Let $A = \norma{g}{\elle\infty}$, let $M > A$, and consider the inequality $J_{M}(\wm) \leq J_{M}(T_{A}(\wm))$, which holds true since $\wm$ is a minimum of $J_{M}$. We have
$$
\arrstre
\begin{array}{l}
\disp
\io \frac{j(x,\nabla\wm)}{[1 + b(x)|T_{M}(\wm)|]^{2}} + \frac12 \io |\wm|^{2} - \io g\,\wm
\\
\disp
\quad
\leq
\io \frac{j(x,\nabla T_{A}(\wm))}{[1 + b(x)|T_{M}(T_{A}(\wm))|]^{2}} + \frac12 \io |T_{A}(\wm)|^{2} - \io g\,T_{A}(\wm)
\\
\disp
\quad
=
\int_{\{|\wm|\leq A\}}\!\! \frac{j(x,\nabla \wm)}{[1 + b(x)|T_{M}(\wm)|]^{2}} + \frac12 \io |T_{A}(\wm)|^{2} - \io g T_{A}(\wm)
\,,
\end{array}
$$
where, in the last passage, we have used that $T_{M}(T_{A}(\wm)) = T_{M}(\wm)$ on the set $\{|\wm|\leq A\}$, and that $j(x,0) = 0$. Simplifying equal terms, we thus get
$$
\arrstre
\begin{array}{l}
\disp
\int_{\{|\wm| \geq M\}} \frac{j(x,\nabla\wm)}{[1 + b(x)|T_{M}(\wm)|]^{2}}
\\
\disp
\quad
+
\frac12 \io [|\wm|^{2} - |T_{A}(\wm)|^{2}]
\leq
\io g\,[\wm - T_{A}(\wm)]\,.
\end{array}
$$
Dropping the first term, which is nonnegative, we obtain
$$
\frac12 \io [\wm - T_{A}(\wm)]\,[\wm + T_{A}(\wm)]
\leq
\io g\,[\wm - T_{A}(\wm)]\,,
$$
which can be rewritten as
$$
\frac12 \io [\wm - T_{A}(\wm)]\,[\wm + T_{A}(\wm) - 2g] \leq 0\,.
$$
We then have, since $\wm = T_{A}(\wm)$ on the set $\{|\wm| \leq A\}$,
$$
\frac12 \int_{\{\wm > A\}} \! [\wm - A][\wm + A - 2g]
+
\frac12 \int_{\{\wm < -A\}} \! [\wm + A][\wm - A - 2g]
\leq
0\,.
$$
Since $|g| \leq A$, we have $A - 2g \geq -A$, and $-A-2g < A$, so that
$$
0 \leq \frac12 \int_{\{\wm > A\}} [\wm - A]^{2} + \frac12 \int_{\{\wm < -A\}}[\wm + A]^{2} \leq 0\,,
$$
which then implies that ${\rm meas}(\{|\wm| \geq A\}) = 0$, and so $|\wm| \leq A$ almost everywhere in $\Omega$. Recalling the definition of $A$, we thus have
\be\label{stimalinfty}
\norma{\wm}{\elle\infty} \leq \norma{g}{\elle\infty}\,.
\ee
Since $M > \norma{g}{\elle\infty}$, we thus have $T_{M}(\wm) = \wm$. Starting now from $J_{M}(\wm) \leq J_{M}(0) = 0$ we obtain, by \rife{stimalinfty},
$$
\io \frac{j(x,\nabla\wm)}{[1 + b(x)|\wm|]^{2}} + \frac12 \io |\wm|^{2} \leq \io g\,\wm \leq {\rm meas}(\Omega)\,\norma{g}{\elle\infty}^{2}\,,
$$
which then implies, by \rife{hypj} and \rife{hypb}, and dropping the nonnegative second term,
$$
\frac{\alpha} {[1+B\norma{g}{\elle\infty}]^{2}}\io|\nabla w_M|^2
\leq
{\rm meas}(\Omega)\,\norma{g}{\elle\infty}^{2}\,.
$$
Thus, $\{\wm\}$ is bounded in $\huz \cap \elle\infty$, and so, up to subsequences, it converges to some function $w$ in $\huz \cap \elle\infty$ weakly in $\huz$, strongly in $\elle2$, and almost everywhere in $\Omega$. We prove now that
\be\label{wlsc}
\io \frac{j(x,\nabla w)}{[1 + b(x)|w|]^{2}}
\leq
\liminf_{M \to +\infty}\,\io \frac{j(x,\nabla\wm)}{[1 + b(x)|\wm|]^{2}}\,.
\ee
Indeed, since $j$ is convex, we have
$$
\arrstre
\begin{array}{l}
\disp
\io \frac{j(x,\nabla\wm)}{[1 + b(x)|\wm|]^{2}}
\\
\disp
\quad
\geq
\io \frac{j(x,\nabla w)}{[1 + b(x)|\wm|]^{2}}
-
\io \frac{j_{\xi}(x,\nabla w)}{[1 + b(x)|\wm|]^{2}} \cdot \nabla [\wm - w]\,.
\end{array}
$$
Using assumption \rife{hypj}, the fact that $w$ belongs to $\huz$, the almost everywhere convergence of $\wm$ to $w$ and Lebesgue's theorem, we have
\be\label{wlsc1}
\lim_{M \to +\infty}\,\io \frac{j(x,\nabla w)}{[1 + b(x)|\wm|]^{2}} = \io \frac{j(x,\nabla w)}{[1 + b(x)|w|]^{2}}\,.
\ee
Using assumption \rife{hypjp}, the fact that $w$ belongs to $\huz$, and the almost everywhere convergence of $\wm$ to $w$, we have by Lebesgue's theorem that
$$
\lim_{M \to +\infty}\,\frac{j_{\xi}(x,\nabla w)}{[1 + b(x)|\wm|]^{2}} = \frac{j_{\xi}(x,\nabla w)}{[1 + b(x)|w|]^{2}}\,,
\quad
\mbox{strongly in $(\elle2)^{N}$.}
$$
Since $\nabla\wm$ tends to $\nabla w$ weakly in the same space, we thus have that
\be\label{wlsc2}
\lim_{M \to +\infty}\,\io \frac{j_{\xi}(x,\nabla w)}{[1 + b(x)|\wm|]^{2}} \cdot \nabla [\wm - w] = 0\,.
\ee
Using \rife{wlsc1} and \rife{wlsc2}, we have that \rife{wlsc} holds true. On the other hand, using \rife{hypj} and Lebesgue's theorem again, it is easy to see that
$$
\lim_{M \to +\infty}\,\io \frac{j(x,\nabla v)}{[1 + b(x)|T_{M}(v)|]^{2}} = \io \frac{j(x,\nabla v)}{[1 + b(x)|v|]^{2}}\,,
\quad
\forall v \in \huz\,.
$$
Thus, starting from $J_{M}(\wm) \leq J_{M}(v)$, we can pass to the limit as $M$ tends to infinity (using also the strong convergence of $\wm$ to $w$ in $\elle2$), to have that $w$ is a minimum.
\end{proof}

As stated before,
we prove Theorem \ref{main_theorem} by approximation. More in detail, if $f_n=T_n(f)$ then
Lemma \ref{esistelinfty} with $g = \fn$ implies that there exists a minimum $\un$ in $\huz \cap \elle\infty$ of the functional
$$
J_n(v)=
\io\frac{j(x,\nabla v)} {[1+b(x)|v|]^{2}}+\frac12\io|v|^2-\io f_n\, v\,,\quad v \in \huz\,.
$$
In the following lemma we prove some uniform estimates on $u_n$.

\begin{lemma}\label{tuttestime}\sl
Let $u_n$ in $\huz\cap \elle{\infty}$ be a minimum of $J_n$.
Then
\be\label{primastima}
\io \frac{|\nabla \un|^{2}}{(1 + b(x)|\un|)^{2}} 
\leq
\frac{1}{2\alpha}\io|f|^2\,;
\ee
\be\label{tk}
\io|\nabla T_k(u_n)|^{2}
\leq
\frac{{(1+B\,k)^{2}}}{2\alpha}\io|f|^2\,;
\ee
\be\label{secondastima}
\io|\un|^2 
\leq
4\io|f|^2 \,;
\ee
\be\label{terzastima}
\io|\nabla\un|
\leq
\bigg[\frac{1}{2\alpha} \io|f|^2\bigg]^\frac12
\bigg({\rm meas}(\Omega)^\frac12 + 2B\bigg[\io|f|^2\bigg]^\frac12\bigg)\,;
\ee
\be\label{gk}
\io| G_k(\un)|^2
\leq
4\int_{\{|\un| \geq k\}}|f|^2\,,
\ee
where $G_{k}(s) = s - T_{k}(s)$ for $k \geq 0$ and $s$ in $\R$.
\end{lemma}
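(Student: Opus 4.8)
The plan is to exploit the minimality of $u_n$ through two kinds of comparison: against the trivial competitor $v=0$, which yields global energy bounds, and against the truncations $v=T_k(u_n)$, which localise the information on the super-level sets $\{|u_n|\geq k\}$. Several steps are variants of computations already carried out in the proof of Lemma \ref{esistelinfty}.

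First I would write $J_n(u_n)\leq J_n(0)=0$, that is
$$
\io \frac{j(x,\nabla u_n)}{[1+b(x)|u_n|]^{2}} + \frac12\io|u_n|^2 \leq \io f_n\,u_n\,.
$$
Bounding $j(x,\nabla u_n)\geq\alpha|\nabla u_n|^2$ from below by \rife{hypj}, using $|f_n|\leq|f|$, and applying Young's inequality to the right-hand side (splitting $\io|f||u_n|$ as $\frac12\io|f|^2+\frac12\io|u_n|^2$ for the first estimate, and as $\io|f|^2+\frac14\io|u_n|^2$ for the second), one absorbs the $\io|u_n|^2$ term and obtains at once \rife{primastima} and \rife{secondastima}. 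Estimate \rife{tk} then follows by restricting the integral in \rife{primastima} to the set $\{|u_n|\leq k\}$, where $\nabla T_k(u_n)=\nabla u_n$ and $1+b(x)|u_n|\leq 1+Bk$ by \rife{hypb}.

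For the $W^{1,1}$ bound \rife{terzastima}, which is the conceptual heart of the lemma, I would factor $|\nabla u_n| = \dfrac{|\nabla u_n|}{1+b(x)|u_n|}\,\bigl(1+b(x)|u_n|\bigr)$ and apply the Cauchy--Schwarz inequality, so that
$$
\io|\nabla u_n| \leq \Bigl(\io\frac{|\nabla u_n|^2}{(1+b(x)|u_n|)^2}\Bigr)^{\frac12}\Bigl(\io(1+b(x)|u_n|)^2\Bigr)^{\frac12}\,.
$$
The first factor is controlled by \rife{primastima}; for the second, the triangle inequality in $\elle2$ together with \rife{hypb} gives $\norma{1+b|u_n|}{\elle2}\leq{\rm meas}(\Omega)^{\frac12}+B\,\norma{u_n}{\elle2}$, and \rife{secondastima} bounds the last norm. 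Collecting constants yields exactly \rife{terzastima}.

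Finally, for \rife{gk} I would repeat, with $T_k$ in place of $T_A$, the truncation argument from the proof of Lemma \ref{esistelinfty}: starting from $J_n(u_n)\leq J_n(T_k(u_n))$ and using $\nabla T_k(u_n)=\nabla u_n\,\chi_{\{|u_n|\leq k\}}$, the identity $1+b|T_k(u_n)|=1+b|u_n|$ on $\{|u_n|\leq k\}$, and $j(x,0)=0$, the contributions of $j(x,\nabla u_n)/[1+b|u_n|]^2$ over $\{|u_n|\leq k\}$ cancel; dropping the remaining nonnegative integral over $\{|u_n|>k\}$ leaves $\frac12\io(|u_n|^2-|T_k(u_n)|^2)\leq\io f_n\,G_k(u_n)$. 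Since $|G_k(u_n)|^2\leq|u_n|^2-|T_k(u_n)|^2$ pointwise and $G_k(u_n)$ vanishes outside $\{|u_n|\geq k\}$, Cauchy--Schwarz and $|f_n|\leq|f|$ then give \rife{gk}. The main obstacle I expect is not any single inequality but the careful bookkeeping in this last step — the cancellation of the truncated $j$-term and the pointwise estimate $|G_k(u_n)|^2\leq|u_n|^2-|T_k(u_n)|^2$ — while the step that really makes the $W^{1,1}$ strategy work is the Cauchy--Schwarz splitting in \rife{terzastima}, which upgrades the degenerate bound \rife{primastima} into a genuine estimate on $\norma{\nabla u_n}{\elle1}$.
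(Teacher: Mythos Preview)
Your proposal is correct and follows essentially the same route as the paper: compare with $v=0$ to get \rife{primastima}, \rife{tk} and \rife{secondastima}, apply the Cauchy--Schwarz factorisation $|\nabla u_n|=\frac{|\nabla u_n|}{1+b|u_n|}(1+b|u_n|)$ for \rife{terzastima}, and compare with $v=T_k(u_n)$ for \rife{gk}. The only cosmetic difference is that for \rife{secondastima} the paper uses H\"older's inequality on $\io f_n\,u_n$ instead of your weighted Young inequality, but both arguments yield the same constant.
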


\begin{proof}
The minimality of $\un$ implies that $J_n(\un) \leq J_n(0)$, that is,
\be\label{0injn}
\io\frac{j(x,\nabla\un)} {[1+b(x)|\un|]^{2}}
+\frac12\io u_n^2\leq \io f_n\,u_n\,.
\ee
Using \rife{hypj} on the left hand side, and Young's inequality on the right hand side gives
$$
\alpha\io\frac{|\nabla\un|^{2}} {[1+b(x)|\un|]^{2}}
+\frac12\io u_n^2\leq \frac12\io u_n^2+\frac12\io f_n^2\,,
$$
which then implies (\ref{primastima}).
Let now $k\geq 0$. The above estimate, and \rife{hypb}, give
$$
\frac{1}{(1+Bk)^{2}} \io {|\nabla T_k(u_n)|^{2}}
\leq
\int_{\{|\un|\leq k\}} \frac{|\nabla u_n|^{2}}{[1+b(x)|u_n|]^{2}}
\leq
\frac1{2\alpha}\io|f|^2\,,
$$
and therefore (\ref{tk}) is proved.
On the other hand, dropping the first positive term in \rife{0injn} and using H\"older's inequality on the right hand side, we have
$$
\frac12\io|\un|^2
\leq\io|f_n\un|
\leq
\bigg[\io|\fn|^2\bigg]^\frac12
\bigg[\io|\un|^2\bigg]^\frac12,
$$
that is, (\ref{secondastima}) holds.
H\"older's inequality, assumption \rife{hypb}, and estimates (\ref{primastima}) and (\ref{secondastima}) give (\ref{terzastima}):
\be\label{coercitivo}
\arrstre
\begin{array}{r@{\hspace{2pt}}c@{\hspace{2pt}}l}
\disp
\io|\nabla\un|
& \leq &
\disp
\bigg[ \io\frac{|\nabla\un|^2} {[1+b(x)|\un|]^{2}}\bigg]^\frac12
\bigg[\io{[1+b(x)|\un|]^{2}}\bigg]^\frac12
\\
& \leq &
\disp
\bigg[\frac{1}{2\alpha} \io|f|^2\bigg]^\frac12
\bigg({\rm meas}(\Omega)^\frac12 + 2B\bigg[\io|f|^2\bigg]^\frac12\bigg)\,.
\end{array}
\ee
We are left with estimate (\ref{gk}).
Since $J_n(u_n)\leq J_n(T_k(u_n))$ we have
$$
\arrstre
\begin{array}{l}
 \disp
\frac12\io\frac{j(x,\nabla\un)} {[1+b(x)|\un|]^{2}}+\frac12\io|\un|^2
 {-\io f_n\un }
\\
\disp
\leq
\frac12\io\frac{j(x,\nabla T_k(\un))} {[1+b(x)|T_k(\un)|]^{2}}+\frac12\io|T_k(\un)|^2-\io f_n T_k(\un)\,.
\end{array}
$$
Recalling the definition of $G_k(s)$, and using that $|s|^{2} - |T_{k}(s)|^{2} \geq |G_{k}(s)|^{2}$, the last inequality implies
$$
\arrstre
\begin{array}{l}
 \disp
\frac12\io\frac{j(x,\nabla G_k(\un))} {[1+b(x)|\un|]^{2}}+\frac12\io|G_k(\un)|^2
\leq
 \io f_n G_k(\un)\,.
\end{array}
$$
Dropping the first term of the left hand side and using H\"older's inequality on the right one, we obtain
$$
\frac12\io|G_k(\un)|^2
\leq\bigg[\int_{\{|\un| \geq k\}}|f|^2\bigg]^\frac12
\bigg[\io| G_k(\un)|^2\bigg]^\frac12\,,
$$
that is, (\ref{gk}) holds.
\end{proof}

\begin{lemma}\label{lemma_convergenze}\sl
Let $u_n$ in $\huz \cap \elle\infty$ be a minimum of $J_n$.
Then there exists a subsequence, still denoted by $\{\un\}$, and a function $u$ in $\ww \cap \elle2$, with $T_k(u)$ in $\huz$ for every $k>0$, such that $\un$ converges to $u$ almost everywhere in $\Omega$, strongly in $\elle2$ and weakly in $\ww$, and $T_k(\un)$ converges to $T_k(u)$ weakly in $\huz$. Moreover,
\be\label{weakconv}
\lim_{n \to +\infty}\,\frac{\nabla \un}{1 + b(x)|\un|} = \frac{\nabla u}{1 + b(x)|u|} \quad \mbox{ weakly in $(\elle2)^{N}$.}
\ee
\end{lemma}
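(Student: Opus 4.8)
The plan is to obtain $u$ by a compactness argument adapted to the non-reflexivity of $\ww$, using only the uniform bounds of Lemma \ref{tuttestime}. Two pivotal remarks come first. By Chebyshev's inequality and \rife{secondastima}, ${\rm meas}(\{|\un|\ge k\})\le\frac{4}{k^{2}}\io|f|^{2}$, a bound independent of $n$; hence, by the absolute continuity of $\io|f|^{2}$ and by \rife{gk}, the quantity $\io|G_{k}(\un)|^{2}\le 4\int_{\{|\un|\ge k\}}|f|^{2}$ can be made as small as we wish, uniformly in $n$, by taking $k$ large. Moreover, revisiting the computation that led to \rife{gk} but this time keeping the term with $j(x,\nabla G_{k}(\un))$ and using \rife{hypj}, one obtains the sharper tail estimate
$$
\alpha\int_{\{|\un|\ge k\}}\frac{|\nabla\un|^{2}}{(1+b(x)|\un|)^{2}}\le 2\int_{\{|\un|\ge k\}}|f|^{2}\,,
$$
whose right-hand side is, again, uniformly small for $k$ large.

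By \rife{tk}, $\{T_{k}(\un)\}_{n}$ is bounded in $\huz$ for every $k$, so a diagonal procedure yields a subsequence along which $T_{k}(\un)\rightharpoonup v_{k}$ weakly in $\huz$, hence strongly in $\elle2$ (Rellich), for every $k\in\N$. Writing $\un=T_{k}(\un)+G_{k}(\un)$ and combining, for each fixed $k$, the $\elle2$-convergence of $T_{k}(\un)$ with the uniform smallness of $\norma{G_{k}(\un)}{\elle2}$, one checks that $\{\un\}$ is a Cauchy sequence in $\elle2$; thus $\un\to u$ strongly in $\elle2$ for some $u\in\elle2$ and, up to a further subsequence, almost everywhere. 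Since $T_{k}$ is $1$-Lipschitz, $T_{k}(\un)\to T_{k}(u)$ in $\elle2$, so $v_{k}=T_{k}(u)$; in particular $T_{k}(u)\in\huz$ for every $k$, and $T_{k}(\un)\rightharpoonup T_{k}(u)$ weakly in $\huz$.

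Next I show that $\{\nabla\un\}$ is equi-integrable, which will put $u$ in $\ww$. Given a measurable set $E\subseteq\Omega$, split $\int_{E}|\nabla\un|=\int_{E}|\nabla T_{k}(\un)|+\int_{E\cap\{|\un|\ge k\}}|\nabla\un|$. By H\"older and \rife{tk}, $\int_{E}|\nabla T_{k}(\un)|\le |E|^{\frac12}\,\frac{1+Bk}{\sqrt{2\alpha}}\,\norma{f}{\elle2}$. For the second term, H\"older together with the sharper tail estimate above and \rife{secondastima} gives
$$
\int_{\{|\un|\ge k\}}|\nabla\un|\le\bigg(\frac{2}{\alpha}\int_{\{|\un|\ge k\}}|f|^{2}\bigg)^{\!\frac12}\bigg(\io(1+B|\un|)^{2}\bigg)^{\!\frac12}\le C\bigg(\int_{\{|\un|\ge k\}}|f|^{2}\bigg)^{\!\frac12}\,,
$$
which tends to $0$ as $k\to+\infty$, uniformly in $n$. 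Hence, given $\varepsilon>0$, one first chooses $k$ so that the second term is $<\frac\varepsilon2$ for all $n$, then $\delta>0$ so that $|E|<\delta$ forces the first term $<\frac\varepsilon2$; together with the $L^{1}$-bound \rife{terzastima}, the Dunford--Pettis theorem provides a further subsequence with $\nabla\un\rightharpoonup V$ weakly in $(\elle1)^{N}$. Testing with $C^{\infty}_{c}(\Omega)$ functions and using $\un\to u$ in $\elle1$ identifies $V=\nabla u$, so $u\in W^{1,1}(\Omega)\cap\elle2$; and since $T_{k}(u)\to u$ in $W^{1,1}(\Omega)$ while $T_{k}(u)\in\huz\subseteq\ww$, we conclude $u\in\ww$, with $\un\to u$ weakly in $\ww$.

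Finally, for \rife{weakconv}, observe that $\frac{\nabla\un}{1+b(x)|\un|}$ is bounded in $(\elle2)^{N}$ by \rife{primastima}, so along a further subsequence it converges weakly in $(\elle2)^{N}$ to some $W$. For fixed $k$ one has $\frac{\nabla T_{k}(\un)}{1+b(x)|\un|}=\frac{\nabla T_{k}(\un)}{1+b(x)|T_{k}(\un)|}$, the product of $\nabla T_{k}(\un)$, which converges weakly in $(\elle2)^{N}$, with $(1+b(x)|T_{k}(\un)|)^{-1}$, which converges a.e. and is bounded by $1$; hence $\frac{\nabla T_{k}(\un)}{1+b(x)|\un|}\rightharpoonup\frac{\nabla T_{k}(u)}{1+b(x)|u|}$ weakly in $(\elle2)^{N}$. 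On the other hand, the sharper tail estimate yields $\big\|\frac{\nabla\un}{1+b(x)|\un|}\,\chi_{\{|\un|\ge k\}}\big\|_{(\elle2)^{N}}\to 0$ uniformly in $n$ as $k\to+\infty$. Writing $\frac{\nabla\un}{1+b(x)|\un|}=\frac{\nabla T_{k}(\un)}{1+b(x)|\un|}+\frac{\nabla\un}{1+b(x)|\un|}\chi_{\{|\un|\ge k\}}$, passing to the limit first in $n$ and then in $k$ — using that $\io\frac{|\nabla u|^{2}}{(1+b(x)|u|)^{2}}<+\infty$, which follows from \rife{primastima} by weak lower semicontinuity and Fatou's lemma, so that $\frac{\nabla T_{k}(u)}{1+b(x)|u|}\to\frac{\nabla u}{1+b(x)|u|}$ in $(\elle2)^{N}$ — one identifies $W=\frac{\nabla u}{1+b(x)|u|}$; since the limit does not depend on the subsequence, the whole sequence converges, and \rife{weakconv} follows. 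The main difficulty throughout is exactly the lack of reflexivity of $\ww$: it is circumvented by the two uniform tail estimates, which provide both the strong $\elle2$-compactness of $\{\un\}$ and the equi-integrability needed to pass to the limits in $\ww$ and in $(\elle2)^{N}$.
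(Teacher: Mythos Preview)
Your argument is correct, but it differs noticeably from the paper's. The paper never introduces your ``sharper tail estimate'' on $\int_{\{|\un|\ge k\}}\frac{|\nabla\un|^{2}}{(1+b|\un|)^{2}}$; instead it proceeds more directly. First, it uses the $W^{1,1}_{0}$ bound \rife{terzastima} to extract an $L^{1}$-compact (hence a.e.\ convergent) subsequence, and then gets strong $L^{2}$ convergence by Vitali's theorem from the equiintegrability of $|\un|^{2}$, rather than by your Cauchy-in-$L^{2}$ argument via the $T_{k}/G_{k}$ split. For the equiintegrability of $\nabla\un$, the paper applies a single H\"older inequality,
\[
\int_{E}|\nabla\un|\le\bigg[\io\frac{|\nabla\un|^{2}}{(1+b|\un|)^{2}}\bigg]^{\frac12}\bigg[\int_{E}(1+B|\un|)^{2}\bigg]^{\frac12},
\]
and then invokes the already-established $L^{2}$ compactness of $\{\un\}$ to control the second factor; this is shorter than your truncation split. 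Finally, for \rife{weakconv} the paper uses Egorov's theorem on $\frac{\Phi}{1+b|\un|}$ together with the equiintegrability of $|\nabla\un|$ to first obtain weak $(\elle1)^{N}$ convergence, and then upgrades to weak $(\elle2)^{N}$ via the bound \rife{primastima}; your route via truncations and the tail estimate bypasses Egorov entirely. In short: the paper's proof is a bit more streamlined and uses only the estimates already stated in Lemma~\ref{tuttestime}, while your approach is more ``hands-on'', trading Egorov's theorem for an additional (but easily derived) gradient tail estimate that makes the truncation machinery do all the work.
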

\begin{proof}
By (\ref{terzastima}), the sequence $\un$ is bounded in $\ww$. Therefore, it is relatively compact in $\elle{1}$. Hence, up to subsequences still denoted by $\un$, there exists $u$ in $\elle{1}$ such that $\un$ almost everywhere converges to $u$. From Fatou's lemma applied to (\ref{secondastima}) we then deduce that $u$ belongs to $\elle{2}$.

We are going to prove that $u_n$ strongly converges to $u$ in $\elle2$. Let $E$ be a measurable subset of $\Omega$; then by (\ref{gk}) we have
$$
\arrstre
\begin{array}{r@{\hspace{2pt}}c@{\hspace{2pt}}l}
\disp
\int_{E}|\un|^2
&\leq &
\disp
2\int_{E }|T_k(\un)|^2+
2\int_{E }|G_k(\un)|^2
\\
\disp
&\leq &
\disp
2 k^2{\rm meas}(E)+2 \io|G_k(\un)|^2
\\
&
\leq
&
\disp 2 k^2 {\rm meas}(E)
+8 \int_{\{|\un| \geq k\}}|f|^2\,.
\end{array}
$$
Since $u_n$ is bounded in $L^2(\Omega)$ by (\ref{secondastima}), we can choose $k$ large enough so that the second integral is small, uniformly with respect to $n$; once $k$ is chosen, we can choose the measure of $E$ small enough such that the first term is small. Thus, the sequence $\{\un^{2}\}$ is equiintegrable and so, by Vitali's theorem, $\un$ strongly converges to $u$ in $\elle2$.

Now we to prove that $u_n$ weakly converges to $u$ in $\ww$.
Let $E$ be a measurable subset of $\Omega$.
By H\"older's inequality, assumption \rife{hypb}, and (\ref{primastima}), one has, for $i \in \{1,\dots,N\}$,
$$
\arrstre
\begin{array}{r@{\hspace{2pt}}c@{\hspace{2pt}}l}
\disp
\int_{E}\left|\frac{\partial\un}{\partial x_{i}}\right|
\leq
\int_{E}|\nabla\un|
& \leq &
\disp
\bigg[ \int_{E}\frac{|\nabla\un|^2} {[1+b(x)|\un|]^{2}}\bigg]^\frac12
\bigg[\int_{E}{[1+b(x)|\un|]^{2}}\bigg]^\frac12
\\
& \leq &
\disp
\bigg[\frac{1}{2\alpha} \io|f|^2\bigg]^\frac12
\bigg[\int_{E}{[1+B|\un|]^{2}}\bigg]^\frac12\,.
\end{array}
$$
Since the sequence $\{\un\}$ is compact in $\elle{2}$, this estimate implies that the sequence $\{\frac{\partial\un}{\partial x_{i}}\}$ is equiintegrable. Thus, by Dunford-Pettis theorem, and up to subsequences, there exists $Y_{i}$ in $\elle1$ such that $\frac{\partial\un}{\partial x_{i}}$ weakly converges to $Y_{i}$ in $\elle1$. Since $\frac{\partial\un}{\partial x_{i}}$ is the distributional partial derivative of $\un$, we have, for every $n$ in $\N$,
$$
\io \frac{\partial\un}{\partial x_{i}}\,\varphi = -\io \un\,\frac{\partial\varphi}{\partial x_{i}}\,,
\quad
\forall \varphi \in C^{\infty}_{0}(\Omega)\,.
$$
We now pass to the limit in the above identities, using that $\partial_{i}\un$ weakly converges to $Y_{i}$ in $\elle1$, and that $\un$ strongly converges to $u$ in $\elle2$: we obtain
$$
\io Y_{i}\,\varphi = -\io u\,\frac{\partial\varphi}{\partial x_{i}}\,,
\quad
\forall \varphi \in C^{\infty}_{0}(\Omega)\,.
$$
This implies that $Y_{i} = \frac{\partial u}{\partial x_{i}}$, and this result is true for every $i$. Since $Y_{i}$ belongs to $\elle1$ for every $i$, $u$ belongs to $\ww$, as desired.

Since by \rife{tk} it follows that the sequence $\{T_{k}(\un)\}$ is bounded in $\huz$, and since $\un$ tends to $u$ almost everywhere in $\Omega$, then $T_{k}(\un)$ weakly converges to $T_{k}(u)$ in $\huz$, and $T_k(u)$ belongs to $\huz$ for every $k \geq 0$.

Finally, we prove \rife{weakconv}. Let $\Phi$ be a fixed function in $(\elle\infty)^{N}$. Since $\un$ almost everywhere converges to $u$ in $\Omega$, we have
$$
\lim_{n \to +\infty}\,\frac{\Phi}{1 + b(x)|\un|} = \frac{\Phi}{1 + b(x)|u|}
\quad
\mbox{almost everywhere in $\Omega$.}
$$
By Egorov's theorem, the convergence is therefore quasi uniform; i.e., for every $\delta > 0$ there exists a subset $E_{\delta}$ of $\Omega$, with ${\rm meas}(E_{\delta}) < \delta$, such that
\be\label{qu}
\lim_{n \to +\infty}\,\frac{\Phi}{1 + b(x)|\un|} = \frac{\Phi}{1 + b(x)|u|}
\quad
\mbox{uniformly in $\Omega \setminus E_{\delta}$.}
\ee
We now have
$$
\arrstre
\begin{array}{l}
\disp
\left| \io \frac{\nabla \un}{1 + b(x)|\un|}\cdot \Phi - \io \frac{\nabla u}{1 + b(x)|u|} \cdot \Phi \right|
\\
\disp
\quad
\leq
\left| \int_{\Omega \setminus E_{\delta}} \nabla \un \cdot \frac{\Phi}{1 + b(x)|\un|} - \int_{\Omega \setminus E_{\delta}} \nabla u \cdot \frac{\Phi}{1 + b(x)|u|} \right|
\\
\disp
\qquad
+
\norma{\Phi}{\elle\infty} \int_{E_{\delta}}[|\nabla\un| + |\nabla u|]\,.
\end{array}
$$
Using the equiintegrability of $|\nabla\un|$ proved above, and the fact that $|\nabla u|$ belongs to $\elle1$, we can choose $\delta$ such that the second term of the right hand side is arbitrarily small, uniformly with respect to $n$, and then use \rife{qu} to choose $n$ large enough so that the first term is arbitrarily small. Hence, we have proved that
\be\label{wl1}
\lim_{n \to +\infty}\,\frac{\nabla \un}{1 + b(x)|\un|} = \frac{\nabla u}{1 + b(x)|u|}
\quad
\mbox{weakly in $(\elle1)^{N}$.}
\ee
On the other hand, from \rife{primastima} it follows that the sequence $\frac{\nabla \un}{1 + b(x)|\un|}$ is bounded in $(\elle2)^{N}$, so that it weakly converges to some function $\sigma$ in the same space. Since \rife{wl1} holds, we have that $\sigma = \frac{\nabla u}{1 + b(x)|u|}$, and \rife{weakconv} is proved.
\end{proof}

\begin{ohss}\rm
The fact that we need to prove \rife{weakconv} is one of the main differences with the paper \cite{BCO}.
\end{ohss}

\begin{proof}[Proof of Theorem \ref{main_theorem}]
Let $u_n$ be as in Lemma \ref{lemma_convergenze}.
The minimality of $\un$ implies that
\be\label{last}
\arrstre
\begin{array}{l}
\disp \io\frac{j(x,\nabla\un)} {[1+b(x)|\un|]^{2}}+\frac12\io|\un|^2
-\io f_n\un
\\
\disp
\qquad
\leq
\io\frac{j(x,\nabla v)} {[1+b(x)|v|]^{2}}+\frac12\io|v|^2
-\io f_n v
\end{array}
\ee
for every $v$ in $\huz$.
The result will then follow by passing to the limit in the previous inequality.
The right hand side of (\ref{last}) is easy to handle since $\fn$ converges to $f$ in $\elle2$.
Let us study the limit of the left hand side of (\ref{last}).
The convexity of $j$ implies that
$$
\arrstre
\begin{array}{l}
\disp
\io \frac{j(x,\nabla\un)}{[1 + b(x)|\un|]^{2}}
\geq
\io \frac{j(x,\nabla T_k(u))}{[1 + b(x)|\un|]^{2}}
\\
\disp
\quad
- \io \frac{j_{\xi}(x,\nabla T_k(u))}{[1 + b(x)|\un|] }
\cdot\bigg(\frac{\nabla\un}{[1 + b(x)|\un|] }-\frac{\nabla T_k(u)}{[1 + b(x)|\un|] }\bigg)\,.
\end{array}
$$
By \rife{weakconv}, assumptions \rife{hypj} and \rife{hypjp}, and Lebesgue's theorem, we have
$$
\arrstre
\begin{array}{r@{\hspace{2pt}}c@{\hspace{2pt}}l}
\disp
\liminf_{n \to +\infty}
\io \frac{j(x,\nabla\un)}{[1 + b(x)|\un|]^{2}}
& \geq &
\disp
\io \frac{j(x,\nabla T_k(u))}{[1 + b(x)|u|]^{2}}
\\
& &
\disp
-
\io \frac{j_{\xi}(x,\nabla T_k(u))}{[1 + b(x)|u|] }
\cdot \frac{\nabla[u-T_k(u)]}{[1 + b(x)|u|]}\,,
\end{array}
$$
that is, since $j_\xi(x,\nabla T_k(u)) \cdot \nabla (u-T_k(u)) = 0$,
$$
\io \frac{j(x,\nabla T_k(u))}{[1 + b(x)|u|]^{2}}
\leq
\liminf_{n \to +\infty}
\io \frac{j(x,\nabla\un)}{[1 + b(x)|\un|]^{2}}\,.
$$
Letting $k$ tend to infinity, and using Levi's theorem, we obtain
 \begin{equation}
\label{levi}
\io \frac{j(x,\nabla u)}{[1 + b(x)|u|]^{2}}
\leq
\liminf_{n \to +\infty}
\io \frac{j(x,\nabla\un)}{[1 + b(x)|\un|]^{2}}\,.
\end{equation}
Inequality (\ref{levi}) and Lemma \ref{lemma_convergenze} imply that
$$
\arrstre
\begin{array}{l}
\disp
\liminf_{n \to +\infty}
\io \frac{j(x,\nabla\un)}{[1 + b(x)|\un|]^{2}}+\frac12\io|\un|^2
-\io f_n\un
\\
\disp
\quad
\geq
\io \frac{j(x,\nabla u)}{[1 + b(x)|u|]^{2}}+\frac12\io|u|^2
-\io fu\,.
\end{array}
$$
Thus, for every $v$ in $\huz$,
$$
\io\! \frac{j(x,\nabla u)} {[1+b(x)|u|]^{2}}
+ \frac12\io\!|u|^2 - \io f u
\leq
\io\! \frac{j(x,\nabla v)} {[1+b(x)|v|]^{2}}+\frac12\io\!|v|^2
-\io f v\,,
$$
so that $u$ is a minimum of $J$; its regularity has been proved in Lemma \ref{lemma_convergenze}.
\end{proof}

\begin{ohss}\label{4maggio}\rm
If we suppose that the coefficient $b(x)$ satisfies the stronger assumption
$$
0<A\leq b(x)\leq B\,,\quad\mbox{almost everywhere in $\Omega$,}
$$ 
it is possible to prove that $J(u) \leq J(w)$ not only for every $w$ in $\huz$, but also for the test functions $w$ such that
\begin{equation}
\label{test-log}
\begin{cases}
 \hbox{$T_k(w)$ belongs to $\huz$ for every $k>0$,}
 \\
\hbox{$\log(1+A|w|)$ belongs to $\huz$,}
\\
\hbox{$w$ belongs to $\elle2$}.
\end{cases}
\end{equation}
Indeed, if $w$ is as in \rife{test-log}, we can use $T_k(w)$
as test function in \rife{enunciato} and we have
$$
J(u)
\leq
J(T_k(w)) =
\io\frac{j(x,\nabla T_k(w))} {[1+b(x)|T_k(w)|]^{2}}+\frac12\io|T_k(w)|^2
-\io f T_k(w).
$$
In the right hand side is possible to pass to the limit, as $k$ tends to infinity, so that we have
$J(u) \leq J(w)$, for every test function $w$ as in \rife{test-log}.
\end{ohss}

\begin{ohss}\label{coerc}\rm
We explicitly point out the differences, concerning the coercivity, between the functionals studied in \cite{BO-Pisa} and the functionals studied in this paper. Indeed, let $0 < \rho < \frac{N-2}{2}$, and consider the sequence of functions
$$
v_{n} = {\rm exp}\left[T_{n}\left( \frac{1}{|x|^{\rho}} - 1\right)\right] - 1\,,
$$
defined in $\Omega = B_{1}(0)$. Then
$$
\log(1 + |v_{n}|) = T_{n}\left( \frac{1}{|x|^{\rho}} - 1\right)\,,
$$
is bounded in $\huz$ (since the function $v(x) = \frac{1}{|x|^{\rho}} - 1$ belongs to $\huz$ by the assumptions on $\rho$), but, by Levi's theorem,
$$
\lim_{n \to +\infty}\,\io |\nabla v_{n}| = \rho \io \frac{{\rm exp}\left[\frac{1}{|x|^{\rho}} - 1\right]}{|x|^{\rho+1}} = +\infty\,.
$$
Hence, the functional
$$
v \in \huz \mapsto \io \frac{|\nabla v|^{2}}{(1 + |v|)^{2}} = \io |\nabla \log(1 + |v|)|^{2}\,,
$$
which is of the type studied in \cite{BO-Pisa}, is non coercive on $\ww$. On the other hand, recalling \rife{coercitivo}, we have
$$
\io |\nabla v| = \io \frac{|\nabla v|}{1 + |v|}\,(1 + |v|) \leq \frac12 \io \frac{|\nabla v|^{2}}{(1 + |v|)^{2}} + \frac12 \io (1 + |v|)^{2}\,.
$$
Thus, the functional
$$
v \in \huz \mapsto \io \frac{|\nabla v|^{2}}{(1 + |v|)^{2}} + \io |v|^{2}\,,
$$
which is of the type studied here, is coercive on $\ww$.
\end{ohss}
 

\end{document}